\newcommand{\rom}[1]{\uppercase\expandafter{\romannumeral #1\relax}}
\newtheorem{theorem}{Theorem}[section]
\newtheorem{lemma}{Lemma}[section]
\newtheorem{proposition}{Proposition}[section]
\newtheorem{algorithm}{Algorithm}[section]
\newtheorem{remark}{Remark}[section]
\newtheorem{assumption}{Assumption}[section]
\numberwithin{equation}{section}
\numberwithin{theorem}{section}
\newcommand{\argmin}{\operatornamewithlimits{argmin}}
\newcommand{\argmax}{\operatornamewithlimits{argmax}}
\def\b{\boldsymbol}
\def\e{\epsilon}
\def\R{\mathbb{R}}
\def\E{\mathbb{E}}
\def\a{\beta}
\def\b{\beta}
\def\e{\epsilon}
\def\g{\gamma}
\def\lam{\lambda}
\def\s{\sigma}
\def\E{\mathbb E}
\def\R{\mathbb R}
\def\l{\left}
\def\r{\right}
\def\lV{\left\lVert}
\def\rV{\right\rVert}
\def\lv{\left\lvert}
\def\rv{\right\rvert}
\def\({\left(}
\def\){\right)}
\def\nb{\nabla}
\def\qd{\quad}
\def\h{\hat}
\def\hL{\hat{L}}
\def\hLj{\hat{L}^j}
\def\w{x}
\def\bws{\bar{x}^*}
\def\ws{x^*}
\def\W{X}
\def\Wj{X^j}
\def\tt{{\text{test}}}
\begin{document}

\title{A consensus-based global optimization method for high dimensional machine learning problems}

\author{Jos\'e A. Carrillo\thanks{Department of Mathematics, Imperial College London,
London SW7 2AZ, United Kingdom. Email: carrillo@imperial.ac.uk} 
\qd 
Shi Jin\thanks{School of Mathematical Sciences, Institute of Natural Sciences, MOE-LSC, Shanghai Jiao Tong University, Shanghai, 200240, P. R. China. Email: shijin-m@sjtu.edu.cn}
\qd 
Lei Li\thanks{School of Mathematical Sciences, Institute of Natural Sciences, MOE-LSC, Shanghai Jiao Tong University, Shanghai, 200240, P. R. China. Email: leili2010@sjtu.edu.cn}
\qd 
Yuhua Zhu\thanks{Department of Mathematics, Stanford University, California, 94305, United States. Email: yuhuazhu@stanford.edu}
}

\date{}
\maketitle

\begin{abstract}
We improve recently introduced consensus-based optimization method, proposed in [R. Pinnau, C. Totzeck, O. Tse and S. Martin, Math. Models  Methods Appl. Sci., 27(01):183--204, 2017], which is a gradient-free optimization method for general non-convex functions. We first replace the isotropic geometric Brownian motion by the component-wise one, thus removing the dimensionality dependence of the drift rate, making the method more competitive for high dimensional optimization problems. Secondly, we utilize the random mini-batch ideas to reduce the computational cost of calculating the weighted average which the individual particles tend to relax toward.  
For its mean-field limit--a nonlinear Fokker-Planck equation--we prove, in both time continuous and semi-discrete settings,  that the convergence of the method, which is exponential in time, is guaranteed with parameter constraints {\it independent} of the dimensionality. We also conduct numerical tests to high dimensional problems to check the success rate of the method.
\end{abstract}

\section{Introduction}

Our main goal in this work is developing gradient-free optimization methods to the following classical unconstrained optimization problem
\begin{equation}
\label{eq: main opt}
    \ws = \argmin_{\w \in \R^d} L(\w)\,,
\end{equation}
in {\it high dimesions}, where the target function, not necessarily convex,  is assumed to be a continuous function defined on $\R^d$ achieving a unique global minimum. Target functions defined on subsets of $\R^d$ can be extended to the whole space recasting the corresponding optimization problem in the form \eqref{eq: main opt}. Moreover, we can assume without loss of generality that the target function is positive, i.e., $L(\ws)>0$, by lifting $L$ by a suitable constant. 

Important examples of target functions stem from machine learning and artificial intelligence applications. Typical neural training networks lead to optimization for functions of the following form
\begin{equation}\label{eq: ml obj}
     L(\w) = \frac{1}{n}\sum_{i= 1}^n \ell(f(\w, \h{x}_i), \h{y}_i)=: \frac{1}{n}\sum_{i= 1}^n \ell_i(\w),
\end{equation}
where $\w$ is the set of parameters defining the model, $(\h{x}_i, \h{y}_i)_{i=1}^n$ constitute the training data set, the function $f(\w,\h{x})$ defines the neural network that one wants to learn, and the function $\ell(f,y)$ is the loss function measuring the distance between the prediction $f(\w, \h{x}_i)$ and the observations $\h{y}_i$.

For such optimization problems, gradient-based methods have been dominating. Nevertheless, in general, most gradient-based methods have problems dealing with functions that have large noise or non-differentiable functions. They are also not designed to handle multi-modal problems or discrete and mixed discrete-continuous design variables \cite{DO}. More specifically in machine learning problems, it has been proved that as the deep neural network gets deeper, the gradient tends to explode or vanish \cite{bengio1994learning,hanin2018neural}. Besides, it will be easily influenced by the geometry of the landscape \cite{liu2019bad}. 

On the other hand, there are also gradient-free methods such as Nelder-Mead (NM) method \cite{nelder1965simplex}, genetic algorithm (GA) \cite{holland1992genetic}, simulated annealing (SA) \cite{kirkpatrick1983optimization}, particle swarm optimization (PSO)\cite{eberhart1995particle, kennedy2006swarm}, etc..  The NM method is a direct search method based on function comparison; GA is inspired by genetic evolution and are commonly used to generate high-quality solutions to optimization; SA is a probabilistic technique for approximating the global optimum, which is often used in discrete search space; PSO is used to model the flocking behavior of birds, and also found to be a good optimization method. 

One of such gradient-free methods is the concensus-based optimization (CBO) method, established in \cite{pinnau2017consensus,carrillo2018analytical,TPBS}. This is a method based on an interacting particle system, along the line of consensus based models \cite{bellomo2013,MR3304346,MR2596552,MR2744704,MR2295620,MR2425606,MR3143990, MR3274797,MR2247927}. This particle system consists of $N$-particles, labeled as $X^j, j=1, \cdots N$, that tend to relax  toward their weighted average, and
in the meantime also undergo fluctuation with a multiplicative noise:
\begin{gather}\label{eq:cctt}
dX^j=-\lambda (X^j-\bar{x}^*) H^{\e}(L(X^j)-L(\bar{x}^*))\,dt +\sigma|X^j-\bar{x}^*| dW^j,
\end{gather}
where $\bar{x}^*$ is the weighted average of the positions of the particles according to 
\begin{gather}\label{eq:weightedmean}
\bar{x}^*=\frac{1}{\sum_{j=1}^N e^{-\beta L(X^j)}}\sum_{j=1}^N X^j e^{-\beta L(X^j)}.
\end{gather}
The function $H^{\e}$ is a regularization of the Heaviside function introduced by the authors with the objective that the particles will drift only if at their positions the cost value is higher than the average of all particles. Later the authors in \cite{carrillo2018analytical} considered this model without the Heaviside cutoff for the convenience of analysis. The diffusion given in \eqref{eq:cctt} is associated with $|X^j-\bar{x}^*|$ and this yields convergence conditions and methods depending on the dimension $d$, see \cite[Theorem 4.1]{carrillo2018analytical}. The motivation for this type of diffusion is that one wants to explore the landscape of the cost function $L(x)$ if one is far from consensus, but when consensus forms one wants the noise to decrease, and eventually to disappear, in order to stabilize the results towards the target $\ws$. This decrease of the temperature is a common feature with simulated annealing \cite{MR904050,MR995752,hwang1990large}.

Here, $e^{-\beta L(x)}$ is the Gibbs distribution corresponding to $L(x)$. The motivation for this choice comes from statistical mechanics: the cost function $L(x)$ corresponds to a potential in which particles move by steepest descent modulated by Brownian noise with $\beta$ being the inverse of the temperature leading to this invariant measure. In this way, the smaller the value of the temperature is, the larger the weight of the normalized Gibbs measure for the agents is on the minimum value of the cost function $L(x)$. The quantitative formulation of this intuition is given by the Laplace principle \cite{BO,miller,dembo2009large}, a classical asymptotic method for integrals, recalled here for reader's sake: for any probability measure $\rho\in\mathcal{P}(\R^d)$ compactly supported with $x_*\in\text{supp}(\rho)$, then
\begin{gather}\label{eq:laplaceprinciple}
 \lim_{\beta\to\infty} \left( -\frac{1}{\beta}\log\left(\int_{\R^d} e^{-\beta L(x)} d\rho(x) \right)\right) = L(\ws) > 0.
\end{gather}
Therefore, if $L$ attains its minimum at a single point $\ws\in\text{supp}(\rho)$, then the suitably normalized measure $e^{-\beta L(x)} \rho$ assigns most of its mass to a small region around $\ws$  and hence we expect it  approximates a Dirac distribution $\delta_{{\bws}}$ for large $\beta\gg1$. Consequently, the first moment of the normalized measure $e^{-\beta L(x)} \rho$, and thus, the discrete counterpart average $\bar{x}^*$, should provide a good estimate of the point at which the global minimum is attained,  $\ws =\argmin L$. Furthermore, the convergence rate toward the global minimum is {\it exponential} in time. However, to guarantee the
convergence of this method, {\it the drift rate $\lambda$ depends on the dimension parameter $d$}, which makes the particles move away from its global equilibrium more easily  for high dimension problems in which $d\gg 1$, such as those raising from
machine or deep learning problems. 

In this paper, we improve the above CBO algorithms in two ways. First, we replace the isotropic Brownian motion term $\sigma|X^j-\bar{x}^*| dW^j$, which is added equally in all dimensions, by its component-wise counterpart. For its mean-field limit equation, in both time continuous and a time semi-discrete settings, we prove that this removes the $d$-dependence constraint on $\lambda$. Secondly, we utilize the random mini-batch ideas, an essential ingredient in stochastic gradient descent (SGD) method \cite{RM51,bottou1998online,bubeck2015convex}, and also introduced recently in \cite{jin2018random} for interacting particle systems, that reduces the computational cost in calculating $\bar{x}^*$ from $O(N)$ to $O(1)$, or $O(nN)$ to $O(1)$ in the case of \eqref{eq: ml obj}, and thus it reduces the overall computation cost of CBO. The noise introduced by the random selection of the mini-batches also adds extra stochasticity which makes the particles more likely to escape the local equilibrium, thus enhances the success rate of the algorithm toward the global minimum, even by one order of magnitude in some cases as shown in subsection 4.2. 

We point out that more recently, in \cite{HJK19}  the consensus of particles, and convergence toward the global minimum under dimension-independent
conditions on parameters and initial data, are established for the particle system (2.2), without using the mean-field limit.

The paper is organized as follows. We present our algorithm and its continuous model in Section \ref{sec: model algo}. We prove in Section \ref{sec: analysis} that in the continuous and a semi-discrete in time settings {the mean-field limit of the algorithm} will converge to the global minimum exponentially fast, with a constraint on $\lambda$ that is independent of $d$.  We give several numerical experiments in Section \ref{numerics} to verify the performance and efficiency of the algorithm. 


\section{A new Consensus Based Optimization Method}
\label{sec: model algo}

Our first new observation over the CBO model (\ref{eq:cctt}) is that if one uses the {\it component-wise geometric Brownian motion}, which we shall clarify soon, the dimension dependence in the convergence estimates \cite[Theorem 4.1]{carrillo2018analytical} can be dramatically reduced. To illustrate these ideas,  let us fix $\bar x^*=a$ to be a constant vector and consider solely the effect of the diffusion term. Let us consider a shifted second moment for \eqref{eq:cctt} in the case of $H\equiv 1$ to obtain
\[
\frac{d}{dt}\E|X-a|^2=-2\lambda \E|X-a|^2+\sigma^2 \sum_{i=1}^d \E |X-a|^2
=(-2\lambda+\sigma^2 d)\E|X-a|^2.
\]
Clearly, if the particles are to form consensus, one needs $2\lambda>d\sigma^2$. 
Now consider the SDE with component-wise geometric Brownian motion
\begin{gather}\label{eq:sdecomponentgbm}
dX=-\lambda(X-a)\,dt+\sigma \sum_{k=1}^d  (X-a)_k dW_k \vec{e}_k,
\end{gather}
where $(X-a)_k$ means the $k$th component of $X-a$, $\{W_k\}_{k=1}^d$ are independent standard Brownian motions, and $\vec{e}_k$ is the unit vector along the $k$th dimension. 
For the interacting particle system \eqref{eq:sdecomponentgbm}, one easily finds that
\[
\frac{d}{dt}\E|X-a|^2=-2\lambda \E|X-a|^2+\sigma^2 \sum_{i=1}^d \E (X-a)_i^2
=(-2\lambda+\sigma^2)\E|X-a|^2.
\]
We only need $2\lambda>\sigma^2$ for the particles to concentrate. 
The restriction between $\lambda$ and $\sigma$ is {\it dimension $d$ insensitive} for the particles to concentrate (or form a consensus as the terminology in \cite{pinnau2017consensus}).  

Based on this observation, we now propose a modification to the CBO model in \eqref{eq:cctt} together with an efficient algorithm for its computation by the random batch approach championed in \cite{jin2018random}. We tweak the CBO method introduced in \cite{pinnau2017consensus,carrillo2018analytical} by considering the following model with diffusion corresponding to a {\it component-wise} geometric Brownian motion
\begin{equation}\label{eq:optmodel}
\begin{aligned}
&dX^j=-\lambda (X^j-\bar{x}^*)\,dt+\sigma \sum_{k=1}^d  (X^j-\bar{x}^*)_k dW^j_k \vec{e}_k \,,\\
&{\bar{x}^*=\frac{1}{\sum_{j=1}^N e^{-\beta L(X^j)}}\sum_{j=1}^N X^j e^{-\beta L(X^j)}.}
\end{aligned}
\end{equation}
Here {$\bar{x}^*$ is the same as \eqref{eq:weightedmean}}. 

Compared to \eqref{eq:cctt}, this new model has a simpler and cleaner form, where we omit the Heaviside function as in \cite{carrillo2018analytical} and use the component-wise geometric Brownian motion to replace their noise.  From the viewpoint of opinion models in social sciences, the interacting particle system   \eqref{eq:optmodel} may be thought as the society drifting according to some common sense opinion or command stemming from the individual parties. Let us remark that $\bar{x}^*$ can be chosen to be updated only at some discrete time points in practice without changing the spirit of the modeling.  

As already commented, the usage of the diffusion according to {\it component-wise} geometric Brownian motion is largely due to its scalability in high dimension space. In fact, the assumptions and the results later in Theorem \ref{thm:timecontinuous} for the particles to converge are all {\it independent of the dimensionality $d$} of the particle $X$. The model \eqref{eq:cctt} on one hand requires $\sigma^2$ to be small or large $\lambda$ which reduces the exploration ability at the initial stage. Moreover, it will also put more severe constraint on the parameters, especially on the second moments of the initial condition of $X_0$. Besides, the use of the diffusivity in \eqref{eq:optmodel} allows the particles to explore each dimension with different rate, and possibly easier to find the optimizer.  To summarize, we expect the optimization method \eqref{eq:optmodel} to be efficient for optimization problems where the dimensionality of the parameter is very high, such as those in deep learning compare to (\ref{eq:cctt}). 

The computational cost of a straightforward numerical scheme to approximate the new continuous CBO method \eqref{eq:optmodel} is too high if we do \eqref{eq:weightedmean} in every time step, especially for big data problems in the form of (\ref{eq: ml obj}). Hence, our second novel approach is to apply the random batch method \cite{jin2018random} to the interacting particle system \eqref{eq:optmodel}, which leads to efficient random algorithms. See also \cite{RM51,bottou1998online,bubeck2015convex} for the relevant mini-batch ideas used for SGD. These random algorithms can also be viewed as new models, which seem to be closer to opinion models in social sciences. 

The random mini-batch strategy developed in \cite{jin2018random} will be extended to two levels for the typical cost functions arising in machine learning such as \eqref{eq: ml obj}. First, we calculate the empirical expectation $\hLj = \hL(\Wj)$ from a random subset of the training data set instead of the accurate $L^j$ as the objective value for the ensemble of particles $\Wj$; second, we apply the mini-batch approach and update the reference $\bws$ by a random subset of the particle ensemble instead of all particles. These two modifications allow us to do high dimensional optimization more efficiently. The mini-batch is done without replacement, that is, we do a random permutation and then select the mini-batch in order. Let us finally remark that this random choice of subsets of interacting particles is very similar to Monte Carlo approaches to compute large averages, and and it has been used to produce efficient algorithms for mean-field (kinetic) swarming models in \cite{AP,CPZ}.

We introduce below the random algorithm to solve in practice our new CBO model \eqref{eq:optmodel} and \eqref{eq:weightedmean}.
\begin{algorithm}
\label{algo: main algo} 
Generate $\{\Wj_0 \in \R^d\}_{j=1}^N$ according to the same distribution $\rho_0$.   
Set the remainder set $\mathcal{R}_0$ to be empty. For $k=0,1,2, \cdots $, do the following:

\step Concatenate $\mathcal{R}_k$ and a random permutation $\mathcal{P}_k$ of the indices $\{1, 2,\cdots, N\}$ to form a list $\mathcal{I}_k=[\mathcal{R}_k, \mathcal{P}_k]$.  Pick $q=\lfloor \frac{N+|\mathcal{R}_k|}{M}\rfloor$ sets of size $M\ll N$ from the list $\mathcal{I}_k$ in order to get batches $B_1^k, B_2^k, \cdots, B_q^k$ and set the remaining indices to be $\mathcal{R}_{k+1}$. Here, $|\mathcal{R}_k|$ means the number of elements in $\mathcal{R}_k$.

\step For each $B_{\theta}^k$ ($\theta=1,\cdots, q$), do the following

   \begin{enumerate}
   \item Calculate the function values (or approximated values) of $L$ at the location of the particles in $B_\theta^k$ by $L^j:= L(\Wj),~\forall j\in B_\theta^k$.
     If $L(\w)$ is in the form \eqref{eq: ml obj} with $n\gg 1$, one then applies the random mini-batch idea again: 
generate a random index subset $A_\theta^k \subset \{1,\cdots, n\}$ with $|A_\theta^k| = m$, and approximate $L^j$ for all $j\in B_\theta^k$ by
\begin{equation*}
	\hLj := \hL_\theta^k(\Wj) = \frac{1}{m}\sum_{i\in A_\theta^k} \ell_i(\Wj),~\forall j\in B_\theta^k\,,
\end{equation*}
where $\hL^k_\theta(\w):=\frac{1}{m}\sum_{i\in A_\theta^k} \ell_i(\w)$ is an unbiased approximation to $L(\w)=\frac{1}{n}\sum_{i=1}^n \ell_i(\w)$ defined in \eqref{eq: ml obj}.

  \item  Update $\bws_{k,\theta}$ according to the following weighted average, 
\begin{equation}
        \label{eq: ws exponential}
        \bws_{k,\theta} = \frac{1}{\sum_{j\in B_{\theta}^k} \mu_j}\sum_{j\in B_{\theta}^k} \Wj \mu_j ,\qd \text{with}\qd  \mu_j = e^{-\beta L^j} \text{~or~} e^{-\beta \hLj}. 
\end{equation}
     \item Update $\Wj$ for $j\in \mathcal{J}_{k,\theta}$ as follows, 
     \begin{equation}\label{discrete swarming}
             \Wj \leftarrow \Wj - \lam \g_{k,\theta}( \Wj - \bws_{k,\theta})+ \sigma_{k,\theta} \sqrt{\g_{k,\theta}} \sum_{i=1}^d \vec{e}_i\(\Wj - \bws_{k,\theta}\)_i    z_i^j,~~z_i^j\sim\mathcal{N}(0, 1),
      \end{equation}
      where $\g_{k,\theta}$ is the learning rate chosen suitably and there are two options for $\mathcal{J}_{k,\theta}$:
       \begin{equation*}
              \begin{aligned}
                   &\text{{\it partial updates: }}\qd \mathcal{J}_{k,\theta}=B_{\theta}^k,\\
                   &\text{{\it full updates: }}\qd  \mathcal{J}_{k,\theta}=\{1,\cdots, N\}.
              \end{aligned}    
        \end{equation*}

   \end{enumerate}

   \step Check the {\bf Stopping criterion:} 
\begin{equation*}
 \frac{1}{d} \lV \Delta\bws \rV_2^2 \leq \e,
\end{equation*}
where $\lV \cdot \rV_2$ is the Euclidean norm and $\Delta\ws$ is the difference between two most recent $\bws_{k,\theta}$.
If this is not satisfied, repeat Steps 1-2.

\end{algorithm}

Note again that $\(\Wj - \bws_{k,\theta}\)_i$ represents for the $i$th component of the vector in \eqref{discrete swarming}. $\lam$ is the drift rate, $\g_{k,\theta}$ is the learning rate, $\s_{k,\theta}$ is the noise rate and $z_i$ is a random variable following the standard normal distribution. Note that we add $\sqrt{\gamma_{k,\theta}}$ on purpose to be consistent with the time-continuous model \eqref{eq:optmodel}. These parameters can be different from step to step in practice, as often used in machine learning and optimization. {In practice, one often chooses $\gamma_{k,\theta}$ in a decreasing fashion satisfying $\sum_k \gamma_{k,\theta}=\infty$. In our experiments, we fix $\gamma_{k,\theta}\equiv \gamma$ to be constant. In general, it needs to be chosen to satisfy a numerical stability condition. One stability result is given in \cite{HJK19}.} Besides, decreasing $\sigma_{k,\theta}$ slowly corresponds to the famous simulated annealing algorithm in optimization \cite{MR904050,MR995752,hwang1990large}.\\

\begin{remark}
The estimated value $\hL$ of the objective function is especially efficient for problems of the form \eqref{eq: ml obj}. Usually, to train a good model, one requires a large number of data, that is, $n\gg 1$. The computational cost would be high if one calculate $L(\w)$ at each step for all particles. If we calculate $\hL$ based on a small subset of the data, the computational cost will be largely saved. Besides, we will show later in the numerical experiments that using $\hL$ can not only save computational cost, but also make the algorithm converge to the optimizer faster, due to stochasticity introduced by randomly selecting the mini-batches. This is an established concept for algorithms such as the SGD \cite{bottou1998online,bubeck2015convex}.
\end{remark}

\begin{remark}
An alternative way to update $\bws$ is to let it equal to $\argmin \hL_j$, that is, 
\begin{equation}
        \label{eq: ws argmin}
        \bws_k = \argmin_{\Wj\in B_\theta^k}\hL(\Wj).
\end{equation}
We will show that it numerically performs as good as the penalized average.  We will leave the theoretical proof of this case for future study.
\end{remark}

\begin{remark}
\label{rmk: step3}
For updating $X^j$:
\begin{itemize}
\item There are two ways to introduce extra noises into the algorithm. One way is to let particles do geometric Brownian motion as in (\ref{discrete swarming}), another way is let particles do a Brownian motion only when $\Wj$ stops moving forward. The reason why the second method also works is because we already introduced noise by using the estimated $\hLj$ and (\ref{eq: ws exponential}) using the randomly generated sets $B_k$, so the noise term in (\ref{discrete swarming}) is sometimes not necessary.  We will show later in the numerical experiments that if we do not have the last term in  (\ref{discrete swarming}) and just add a Brownian motion when $\Wj$ stops moving forward, the performance is still good. 

\item In some optimization problem, since the landscape of the objective function is too complicated (for example, the MNIST data in Section \ref{sec: MNIST}), it cannot converge to the global minimizer at stoppting time. Therefore, when $\bws$ stops updating, we record $\hL(\bws)$ at that step, and make all particles do an independent Brownian motion, i.e. for $\forall j$,
\begin{equation*}
\Wj \leftarrow \Wj + \sigma_{k,\theta} \sqrt{\g} \sum_{i=1}^d \vec{e}_i z_i^j,~~~z_i^j\sim\mathcal{N}(0, 1),
\end{equation*}
then repeat the algorithm. We terminate the procedure if the recorded $\hL(\bws)$ is not decreasing any more. 

\end{itemize}
\end{remark}

\section{Analysis of the mean-field limit models}
\label{sec: analysis}
The analysis of the computational model in Section \ref{sec: model algo} is quite challenging: analyzing the $N$-particle system, showing the existence of singular invariant measures quantifying the convergence towards them would be a fantastic breakthrough. In this section, we will consider the formal mean-field limit models ($N\to\infty$) of the interacting particle system \eqref{eq:optmodel}, {which makes the analysis possible even if working with high dimensional PDEs, as already shown in \cite{carrillo2018analytical}.} We remark that the rigorous proof of the mean-field limit is another open problem for these interacting particle systems due to the difficulty of managing the multiplicative noise term in \eqref{eq:optmodel}. Depending on how we treat the time variable, one can write a time-continuous model and a semi-continuous mean-field models, as discussed below.

\subsection{Time continuous model}

Formally, taking $N\to\infty$ in the model \eqref{eq:optmodel} with full batch (or alternatively, $\gamma\to 0$ and $N\to\infty$ in Algorithm \ref{algo: main algo} with full batch), the mean field limit of the model is formally given by the following stochastic differential equation for $X=X(t)$:
\begin{gather}\label{eq: cts swarming}
dX =-\lambda(X - \bws)dt + \sigma \sum_{i=1}^d \vec{e}_i (X - \bws)_i dW_i , 
\end{gather}
where
\begin{gather}\label{eq: cts swarming2}
\bws=\frac{\E(Xe^{-\beta L(X)})}{\E(e^{-\beta L(X)})}.
\end{gather}
We refer to \cite{MR2744704} and {\cite{BCC,MR3331178,HJ1,J,JW} for formal and rigorous discussions respectively of the results about mean-field models.}
The law $\rho(\cdot,t)$ of the process $X(t)$ in the high dimensional space $\R^d$ solving the nonlinear stochastic differential equation \eqref{eq: cts swarming}-\eqref{eq: cts swarming2} follows the nonlinear Fokker-Planck equation
$$
\partial_t\rho=\lambda \nabla\cdot((x-\bws)\rho)+\frac{1}{2}\sigma^2\sum_{i=1}^d\partial_{ii}((x-\bws)_i^2 \rho),
$$
where
\[
\bws=\frac{\int_{\R^d} xe^{-\beta L(x)}\rho(x,t)\,dx}{\int_{\R^d} e^{-\beta L(x)}\rho(x,t)\,dx}.
\] 
We will prove that the stochastic process $X(t)$ will approach some point $\tilde{x}$, which is an approximation of $\argmin L(x)$. Our proof needs the following assumptions:
\begin{assumption}\label{ass:modelcond}
We assume that $L_m:=\inf L>0$, without loss of generality,  and that the cost function $L$ satisfies
$c_L:=\max(\| \max_i|\partial_{ii}L|\|_{\infty}, \| r(\nabla^2 L)\|_{\infty})<\infty$.
\end{assumption}

Here, $\nb^2L$ represents the Hessian of $L$, and $r(\nabla^2 L)$ is the spectral radius while $\partial_{ii}L$ is the diagonal element of $\nb^2L$. Our assumption means that these two quantities should be of the same order. One sufficient condition is that all second derivatives of $L$ are bounded. 
Let us also define the following averaged quantities:
\begin{gather}\label{eq:varianceandML}
V(t):=\E|X-\E X|^2 \qquad \mbox{and} \qquad M_L(t):=\E e^{-\beta L(X)}.
\end{gather}
Here, $V$ is the variance of the process $X$, while $M_L(t)$ is the total weight in the optimization. The following result gives the convergence of the continuous mean field model \eqref{eq: cts swarming}, whose proof is deferred to Appendix \ref{app:meanfield1} following the blueprint in \cite{carrillo2018analytical}. 

\begin{theorem}\label{thm:timecontinuous}
If $\beta,\lambda, \sigma$ and the initial distribution are chosen such that
\begin{gather}\label{eq:ass}
\begin{split}
& \mu:=2\lambda-\sigma^2-2\sigma^2\frac{e^{-\beta L_m}}{M_L(0)}>0,\\
& \nu:=\frac{2V(0)}{\mu M_L^2(0)}\beta e^{-2\beta L_m} c_L(2\lambda+\sigma^2)\le \frac{3}{4},
\end{split}
\end{gather}
then $V(t)\to 0$ exponentially fast and there exists $\tilde{x}$ such that
$
\bws(t)\to \tilde{x},~\E X\to \tilde{x}
$
exponentially fast. Moreover, it holds that 
\[
\begin{split}
L(\tilde{x}) &\le -\frac{1}{\beta}\log M_L(0)
-\frac{1}{2\beta}\log\left(1-\nu \right) \\
&  \le L_m+r(\beta)+\frac{\log 2}{\beta},
\end{split}
\]
where
\[
r(\beta):= -\frac{1}{\beta}\log M_L(0)-L_m \to 0,~\beta\to\infty.
\]
\end{theorem}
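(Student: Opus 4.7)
The plan is to follow the blueprint of \cite{carrillo2018analytical} and establish coupled differential inequalities for the variance $V(t)$ and the total weight $M_L(t)$, from which a bootstrap argument produces exponential decay of $V$ while keeping $M_L$ bounded below. The component-wise structure of the noise in \eqref{eq: cts swarming} is decisive for removing the factor of $d$ that appears in the isotropic case, since the quadratic covariation of $X$ is $\sigma^2\sum_i(X-\bws)_i^2 = \sigma^2|X-\bws|^2$ rather than $\sigma^2 d\,|X-\bws|^2$.

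First, I would apply It\^o's formula to $|X-\E X|^2$. Because $\E X$ is deterministic and satisfies $\tfrac{d}{dt}\E X = -\lambda(\E X - \bws)$, the drift in $Y := X - \E X$ collapses to $-\lambda Y$, and taking expectations gives
\[
\frac{dV}{dt} = -2\lambda V + \sigma^2\,\E|X-\bws|^2 = -2\lambda V + \sigma^2 V + \sigma^2|\E X - \bws|^2.
\]
The identity $\E X - \bws = -\E[(X-\E X)e^{-\beta L}]/M_L$ combined with Cauchy--Schwarz and $\E e^{-2\beta L}\le e^{-\beta L_m}M_L$ yields $|\E X - \bws|^2 \le V e^{-\beta L_m}/M_L$, so that under a suitable lower bound on $M_L$ one obtains a differential inequality of the form $dV/dt \le -\mu V$ with $\mu$ as in \eqref{eq:ass}.

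Next, for $M_L(t) = \E e^{-\beta L(X)}$ I would apply It\^o's formula to $e^{-\beta L(X)}$, producing a drift $\lambda\beta\,\E[e^{-\beta L}\nabla L\cdot(X-\bws)]$ and a diffusion correction $\tfrac{1}{2}\sigma^2\,\E\bigl[e^{-\beta L}\sum_i (X-\bws)_i^2\bigl(\beta^2(\partial_i L)^2 - \beta\partial_{ii}L\bigr)\bigr]$. Using Assumption \ref{ass:modelcond} to dominate $|\nabla L\cdot(X-\bws)|$ and the quadratic gradient term by quantities of order $c_L|X-\bws|^2$, and pulling $e^{-\beta L_m}$ out of the expectations, I arrive at
\[
\Big|\frac{dM_L}{dt}\Big| \,\lesssim\, \beta\, e^{-\beta L_m}\, c_L(2\lambda+\sigma^2)\,\E|X-\bws|^2,
\]
so that once the exponential decay of $V$ from the first step is fed in, the cumulative change $|M_L(t)-M_L(0)|$ is controlled precisely by the quantity $\nu$ in \eqref{eq:ass}.

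The main obstacle is closing the bootstrap, since the $V$-estimate presupposes a lower bound on $M_L$ while the $M_L$-estimate presupposes the exponential decay of $V$. I would set $T^* := \inf\{t\ge 0 : M_L(t) < M_L(0)\sqrt{1-\nu}\}$ and rule out $T^*<\infty$ by contradiction: on $[0,T^*)$ the $V$-inequality delivers $V(t)\le V(0)e^{-\mu t}$, which plugged into the $M_L$-inequality gives $|M_L(T^*)-M_L(0)| < M_L(0)(1-\sqrt{1-\nu})$, contradicting the minimality of $T^*$. Hence $T^*=\infty$, $V(t)\to 0$ exponentially, and $|\bws - \E X|\to 0$ exponentially by the Cauchy--Schwarz estimate; the ODE $\tfrac{d}{dt}\E X = -\lambda(\E X-\bws)$ then has an exponentially small right-hand side, making $\E X$ Cauchy and exponentially convergent to some $\tilde{x}$, and consequently $\bws(t)\to\tilde{x}$ as well. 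Finally, since $V\to 0$ and $\E X\to \tilde{x}$ the law of $X(t)$ concentrates at $\tilde{x}$, so by bounded convergence $M_L(t)\to e^{-\beta L(\tilde{x})}$; taking $t\to\infty$ in the lower bound $M_L(t)\ge M_L(0)\sqrt{1-\nu}$ and then applying $-\tfrac{1}{\beta}\log(\cdot)$ yields the asserted estimate $L(\tilde{x})\le -\tfrac{1}{\beta}\log M_L(0)-\tfrac{1}{2\beta}\log(1-\nu)$.
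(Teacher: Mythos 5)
Your overall route is the same as the paper's: It\^o's formula for $V(t)$ and $M_L(t)$, the estimate $|\E X-\bws|^2\le e^{-\beta L_m}V/M_L$ (you obtain it via Cauchy--Schwarz, the paper via Jensen's inequality applied to the weighted measure --- equivalent), a stopping-time bootstrap, and the Laplace principle at the end. Two steps, however, do not go through as written. First, in the $M_L$ estimate you claim a two-sided bound $\bigl|\frac{d}{dt}M_L\bigr|\lesssim \beta e^{-\beta L_m}c_L(2\lambda+\sigma^2)\E|X-\bws|^2$ by ``dominating $|\nabla L\cdot(X-\bws)|$ and the quadratic gradient term by quantities of order $c_L|X-\bws|^2$.'' Assumption \ref{ass:modelcond} controls only second derivatives, so neither $\nabla L(X)\cdot(X-\bws)$ nor $\beta^2\sum_i(\partial_iL)^2(X-\bws)_i^2$ can be dominated this way ($\nabla L$ may grow linearly). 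The needed move, which is the crux of the paper's lemma, is the cancellation $\E\bigl[e^{-\beta L(X)}(X-\bws)\bigr]=0$: it lets you replace $\nabla L(X)$ by $\nabla L(X)-\nabla L(\bws)$, which the Hessian bound does control, while the $\beta^2(\partial_i L)^2$ term is simply dropped because it has a favorable sign. This produces only a \emph{lower} bound on $\frac{d}{dt}M_L$, which is all the argument uses.

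Second, your bootstrap does not quantitatively close. The inequality actually available is $\frac{d}{dt}M_L\ge -\beta c_L(2\lambda+\sigma^2)e^{-2\beta L_m}V/M_L$ (the extra $e^{-\beta L_m}/M_L$ is precisely where the normalization in $\nu$ comes from), so on the set where $M_L\ge\sqrt{1-\nu}\,M_L(0)$, integrating against $V(t)\le V(0)e^{-\mu t}$ only bounds the total drop of $M_L$ by $\frac{\nu}{2\sqrt{1-\nu}}M_L(0)$, which is $\ge(1-\sqrt{1-\nu})M_L(0)$ for every $\nu\in(0,3/4]$; hence the strict inequality you need at $T^*$ is not obtained. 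The paper's device is to track $M_L^2$: multiplying by $2M_L$ removes the $1/M_L$ and gives $\frac{d}{dt}M_L^2\ge -2\beta c_L(2\lambda+\sigma^2)e^{-2\beta L_m}V$, which integrates to $M_L^2(t)> M_L^2(0)(1-\nu)\ge\frac14 M_L^2(0)$ and closes the continuity argument (at the threshold $\frac12 M_L(0)$, or equivalently at your $\sqrt{1-\nu}\,M_L(0)$). A final caveat on constants: your It\^o correction for $V$ is $\sigma^2\E|X-\bws|^2$ (consistent with the heuristic computation in Section \ref{sec: model algo}), which yields the decay rate $2\lambda-\sigma^2-\sigma^2 e^{-\beta L_m}/M_L$ rather than literally the $\mu$ of \eqref{eq:ass}; the paper's appendix carries $\frac12\sigma^2$ in that term, which is what produces the stated $\mu$, so you should either reconcile that factor or state explicitly the rate your computation delivers. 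Once these points are repaired, your concluding steps (exponential convergence of $\E X$ and $\bws$ to a common $\tilde x$, $M_L(t)\to e^{-\beta L(\tilde x)}$, and taking logarithms in the lower bound for $M_L$) coincide with the paper's.
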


In the above result, we have used \eqref{eq:ass}  and the Laplace principle \eqref{eq:laplaceprinciple} so that $r(\beta)\to 0,~\beta\to\infty$.  How well $L(\tilde{x})$ approximates $L_m$ depends on how well $-\frac{1}{\beta}\log M_L(0)$ approximates $L_m$. When $\beta$ is sufficiently large and $L$ has a unique global minimizer, together with some reasonable assumptions put on $L$ and the probability density, one acually has
\cite{hsu1948theorem,McClure1983error,Inglot2014simple}
 \begin{gather}\label{eq:errorterm}
r(\beta) \le \frac{d}{2}\frac{\log(\beta/(2\pi))}{\beta}
+\frac{C_L}{\beta},~~\beta\gg 1,
 \end{gather}
 where $C_L$ does not depend on $d$. The convergence rate in Theorem \ref{thm:timecontinuous} does not depend on $d$ but in the error term \eqref{eq:errorterm}, there is linear $d$ dependence.
 For moderately high dimensional case, for example, in section \ref{sec: Rast} when $d=20$ and $\beta =O(10^2)$, one gets reasonably small error estimate.  
In theory, of course for any $d$, one can choose $\beta$ sufficiently large so that the error is still small. However, when $\beta$ is too large, $\exp(-\beta L)$ is near zero, and one loses lots of significant digits. On the other hand, when $\beta\gg d\gg 1$,  the average \eqref{eq:weightedmean} or \eqref{eq: ws exponential} becomes 
\[
\bar{x}^*=\argmin_{\Wj} L(\Wj),
\] which works similarly well as commented in section \ref{sec: MNIST},  though one cannot prove a theorem. In summary, our method can be feasible in high dimensional problems.

 Besides the largeness of $\beta$, how big $r(\beta)$ is also depends on the initial support of the law of $X_0$. If the probability that $X_0$ is near the minimizer is small, the  approximation quality is poor. This means for the algorithm to work well, the particles should explore the surrounding area well so that there is some probability that the neighborhood of the minimizer can be visited.

The assumptions \eqref{eq:ass} basically require $\lam, \beta$ to be large or $\sigma, V(0)$ to be small enough. Notice that the set of the parameters is not empty as one can control the initial variance $V(0)$. The assumption is restrictive in the above theorem, however this has to be understood as a proof of concept where other possible approaches may lead to improvements. In the first equation of \eqref{eq:ass}, $M_L(0)$ is also some quantity of the order $e^{-\beta L_m}$, so it essentially means $\lam\gtrsim \sigma^2$, which ensures that the variance of $X$ decays to zero. In fact, in the geometric Brownian motion, one needs $2\lam> \sigma^2$ to guarantee the variance of $X$ to vanish as shown in section \ref{sec: model algo}. Under such assumption, all particles will converge with exponential rate to a point which is within $O(1/\beta)$ of the global minimum. Moreover, recall $e^{-2\beta L_m} /M_L^2(0)\ge 1$. Hence, in the original CBO interacting particle system and their mean-field counterpart in \cite{carrillo2018analytical}, a large $d$ dependence in $\mu$ is sensitive for $\lambda$. This means that our model is more feasible and adapted for high dimensional optimization problems.


\subsection{The semi-discrete model}

Let us consider the CBO method \eqref{eq:optmodel} where $\bws$ is updated at only a number of
discrete time points. Alternatively, in Algorithm \ref{algo: main algo}, we let $\gamma$ fixed, take $N\to\infty$ and use the time continuous SDE to replace the discrete scheme at one iteration.

Define
\begin{gather}
t_k:=k\gamma.
\end{gather}
Then, one has the following semi-discrete model in the time-continuous setting, where a particle evolves according to the component geometric Brownian motion on interval $I_k:=[t_{k-1}, t_k)$, and the references $\bws$ is only updated on some discrete time points as
\begin{gather}\label{eq: semi-discrete}
dX=-\lambda(X-\bar{x}_k^*)\,dt+\sigma \sum_{i=1}^d (X-\bar{x}_k^*)_i dW_i \vec{e}_i \,, \quad t\in I_k\,,
\end{gather}
where  
\[
\bar{x}_k^*=\frac{\int_{\R^d} x\exp(-\beta L(x))\,\rho(x,t_{k-1})\,dx}{\int_{\R^d} \exp(-\beta L(x))\,\rho(x,t_{k-1})\,dx}.
\]
Similarly, $\rho(\cdot, t_k)$ means the law of $X$ at time $t_k$. 
We again consider the mean and variance of the model $m(t)=\mathbb{E}X$ and $V(t)=\E|X-\E X|^2$.
For this semi-discrete model, we have the following results, whose proof is given in Appendix \ref{app:semicont}:
\begin{proposition}\label{prop: semi discrete}
If the average sequence $\{\bar{x}_k^*\}$ is bounded and
\begin{gather}
\label{eq: ass on lam}
2\lambda>\sigma^2,
\end{gather} 
then the total weight defined in \eqref{eq:varianceandML} is bounded below 
\[
M_L^*:=\inf_{k}M_L(t_k)>0.
\]
 {Moreover, if step size $\gamma$ also satisfies that
\begin{gather}\label{eq:discretecondition}
e^{(-2\lambda+\sigma^2)\gamma}+\frac{e^{-\beta L_m}}{M_L^*}
(e^{(-2\lambda+\sigma^2)\gamma}-e^{-2\lambda\gamma})<1,
\end{gather}}
then  $m(t_k)\to \bar{m}$
and $V(t_k)\to 0$.
Consequently, $\bar{x}_k^*\to \bar{m}$ and  the law of $X$ converges weakly to $\delta(x-\bar{m})$ (i.e. in the dual of $C_b(\R^d)$, the space of bounded continuous functions equipped with the supremum norm).
\end{proposition}

\begin{remark}
\label{rmk:sigma}
Compared with Theorem \ref{thm:timecontinuous}, the choice of the parameters is much less restrictive. We only need (\ref{eq: ass on lam}) so that the variance can diminish to zero. However, we have assumed $\{\bar{x}_k^*\}$ to be bounded and  condition \eqref{eq:discretecondition}.
\end{remark}

\begin{remark}
\label{rmk:violation}
In actual numerical experiments, the condition on $\s, \lam$ is much loose than the 
theoretical condition. 
\end{remark}

To remove the assumption that $\bar{x}_k^*$ is bounded, one needs to estimate how $|\bar{x}_k^*|$ relies on the initial bounds of $|\bar{x}_0^*|$ so that the estimate can close up. For this discrete case, we have
\begin{multline*}
\frac{d}{dt}\mathbb{E}e^{-\beta L(X)}
=\beta\lambda \E(e^{-\beta L(X)}\nabla L(X)\cdot(X-\bar{x}_k^*))\\
+\frac{1}{2}\sigma^2\E\left[e^{-\beta L(X)}\left(\beta^2 \sum_i \partial_iL(X)^2(X-\bar{x}_k^*)_i^2
-\beta\sum_i (X-\bar{x}_k^*)_i^2\partial_{ii}L(X)\right)\right]
\end{multline*}
The issue is that the first term is hard to control now.  A possibility to overcome this difficulty is to study the fully discretized scheme, with the noise terms discretized using the Euler-Maruyama scheme. This will be explored elsewhere.


\section{Numerical Performance}
\label{numerics}
We assume $\lambda = 1$ in all the numerical examples in Sections \ref{sec: comp sgd}, \ref{sec: Rast} and \ref{sec: MNIST}. 

Let us first comment on some practical implementation aspects of the Algorithm \ref{algo: main algo}. The operator splitting to update all particles can be used in order to avoid overshooting. {One can choose to implement the algorithm as}
\[
\begin{split}
&\h{\Wj_{k}}=\bws_k+(\Wj_k-\bws_k)e^{-\lambda \g},\\
&\Wj_{k+1}= \h{\Wj_k} + \sigma \sqrt{\g} \sum_{i=1}^d\vec{e}_i\(\h{\Wj_k}-\bws_k\)_i z_i^j,
\end{split}
\]
where the first equation is the exact solution of the SDE
$
d\Wj = -\lam(\Wj-\ws)dt,
$
from $t = k\g$ to $t = k(\g+1)$.
 By overshooting, we mean $\h{\Wj_{k}}$ oscillates around $\bws_k$. This could bring instability as in the case of forward Euler in solving some stiff problems.

An alternative way to implement this step is to freeze $\bws_k$ in a time-step interval, then the geometric Brownian motion can be solved exactly by
\begin{gather}
\Wj_{k+1}=\ws_k+\sum_{i=1}^d\vec{e}_i(\Wj_k-\bws_k)_i\exp\( \(- \lambda - \frac{1}{2}\sigma^2\)\gamma+\sigma\sqrt{\gamma} \,z_i^j\)
\end{gather}
In practical simulations, this is comparable with the above splitting approach in most cases. 

Concerning the parameters in our CBO model \eqref{eq:optmodel}, one can observe that by increasing $\beta$ and decreasing $\sigma$ as iterations accumulate, the accuracy and convergence speed of the results  will be improved. The cooling strategy can be chosen to be similar to the annealing approach \cite{MR904050,MR995752,hwang1990large}. The intuition is that one decreases the temperature so that the system will cool down to the global minimum. Another practical strategy is to use larger $\sigma$ at early stages of the simulations for better exploration of the cost landscape, while use smaller $\sigma$ at later stages. For example, a possible strategy is to take
\[
\sigma_k=\sigma_0/\log(k+1)
\]
Decreasing $\sigma$ corresponds to decreasing the noise level. As it has been seen in the semi-discrete model \eqref{eq: semi-discrete}, we need $2\lambda>\sigma^2$ for the particles to concentrate. Hence, this strategy allows us to use large $\sigma>\sqrt{2\lambda}$ in the early stage to explore the surrounding area well.

We now show the performance of Algorithm \ref{algo: main algo} for our CBO model \eqref{eq:optmodel} in three model cases: an optimization of a test function with large oscillations and wide local minima in one dimension, a neural network for the MNIST data set and an optimization of a test function with many local minima in high dimension.

\subsection{Comparison with stochastic gradient descent (SGD) method}
\label{sec: comp sgd}
We first show an example where SGD can hardly find the global minimum, however, our method can easily find it. It has already been observed and proved that the geometry of the objective function will affect the performance of SGD method \cite{ZhuDaiSGD, Keskar2017, threefactors}. One of the reasons is that the expected exiting time for SGD to escape from a local minimum is exponentially proportional to the inverse of Hessian at the minimum, height of the basin and batch size. We construct the following optimization problem:
\begin{equation}
\label{eq: obj comp SGD}
\begin{aligned}
&\ell(x,\h{x}_i) = e^{\sin(2x^2)}+\frac{1}{10}(x - \h{x}_i - \frac{\pi}{2})^2, \qd \h{x}_i \sim \text{Normal}(0,0.1)\\
&L(x) = \frac{1}{n}\sum_{i = 1}^n\ell(x,\h{x}_i)
\end{aligned}
\end{equation}
The objective function $L(x)$ is plotted in Figure \ref{fig: obj42} for $n = 10^4$. It is easy to see that the global minimum is $\ws = \pi/2$. 
\begin{figure}[htbp]
\includegraphics[width=1\textwidth]{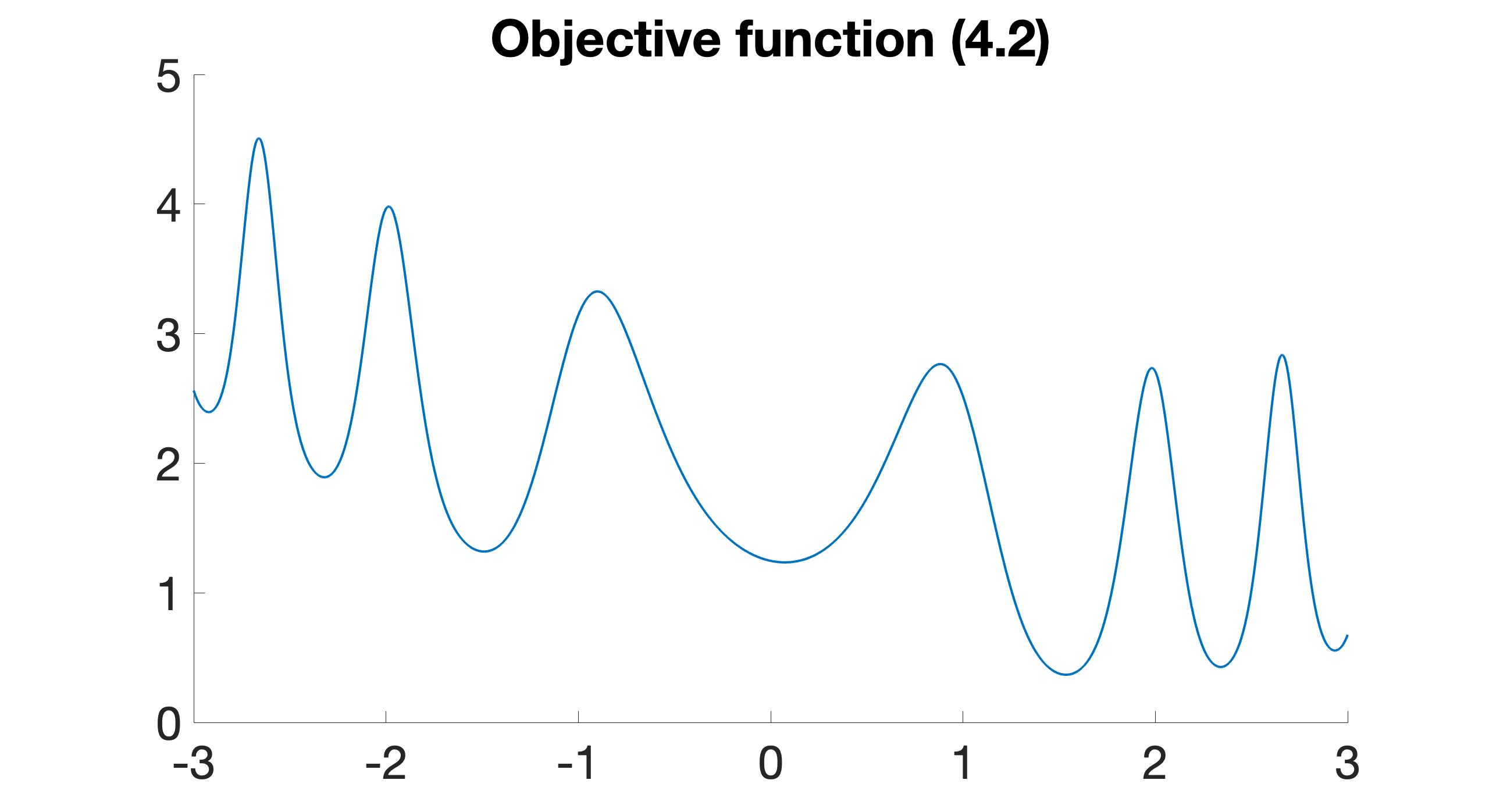}
\caption{}
\label{fig: obj42}
\end{figure}

SGD updates the parameter $x_k$ in the following way, 
\begin{equation*}
\begin{aligned}
	x_{k+1} = x_k - \frac{1}{m}\sum_{i\in b_k}\nb_x\ell(x_k,\h{x}_i),
\end{aligned}
\end{equation*}
where $b_k$ is an index set randomly drawn from $\{1, \cdots,n\}$. We can see there are many local minima with different shapes. Especially, the height of all the basins are large, some of the local minimum is much flatter than the global minimum, in which case SGD tends to be trapped in those local minima. However, the geometry of the objective function has little influence on our method. We show the success rate of both methods in Figure \ref{fig: suc obj42} with the same initialization and variables $n,m,\g$. We consider one simulation is successful if, $\lv \bws_k - \ws \rv < 0.25$ for our CBO or $\lv x_k - \ws \rv < 0.25$ for SGD, which means that our approximated minimizers is in one-half width of the global minimizer.
For both methods, we run 100 simulations and each simulation we run $10^4$ steps. In addition, we initialize $\W^j_0$ from uniform distribution in $[-3,3]$, and set 
\begin{equation*}
	\g = 0.01, \qd n = 10^4, \qd m = 20.
\end{equation*}
Besides, for our method, we set
\begin{equation*}
	N = 100, \qd M = 20, \qd \s = 5, \qd \b = 30,
\end{equation*}
and use the partial updates. For each simulation, the algorithm stops either when the stopping criterion in Step 3 of Algorithm \ref{algo: main algo} is satisfied with $\e = 10^{-3}$, or it finishes $10^4$ steps.

\begin{figure}[htbp]
\centering
\includegraphics[width=0.5\textwidth]{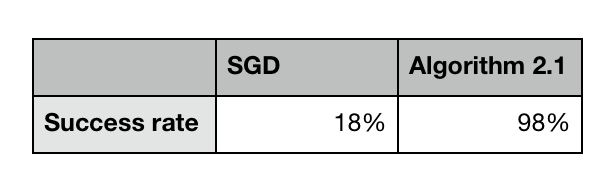}
\caption{The table shows the success rate of SGD and Algorithm \ref{algo: main algo}.}
\label{fig: suc obj42}
\end{figure}

From the table we can see that our method performs significantly better than SGD, the computational time for our method is a little longer than SGD though.  Notice here $M = 20$, that means,  there are only $20$ particles interacting with each other in each step, which is also computationally efficient. 


\subsection{The Rastrigin function in $d=20$}
\label{sec: Rast}
In this section, we compare our method with the one introduced in \cite{pinnau2017consensus}. 
The goal is to find the global minimum of the Rastrigin function, which reads
\begin{equation}
    \label{Rast fcn}
    L(x) = \frac{1}{d}\sum_{i = 1}^d\l[\l(x_i - B\r)^2 - 10\cos\l(2\pi(x_i-B)\r) + 10\r] + C,
\end{equation}
with $B = \text{argmin}\  L(x)$ and $C = \min L(x)$.

\begin{figure}[htbp]
\includegraphics[width=1\textwidth]{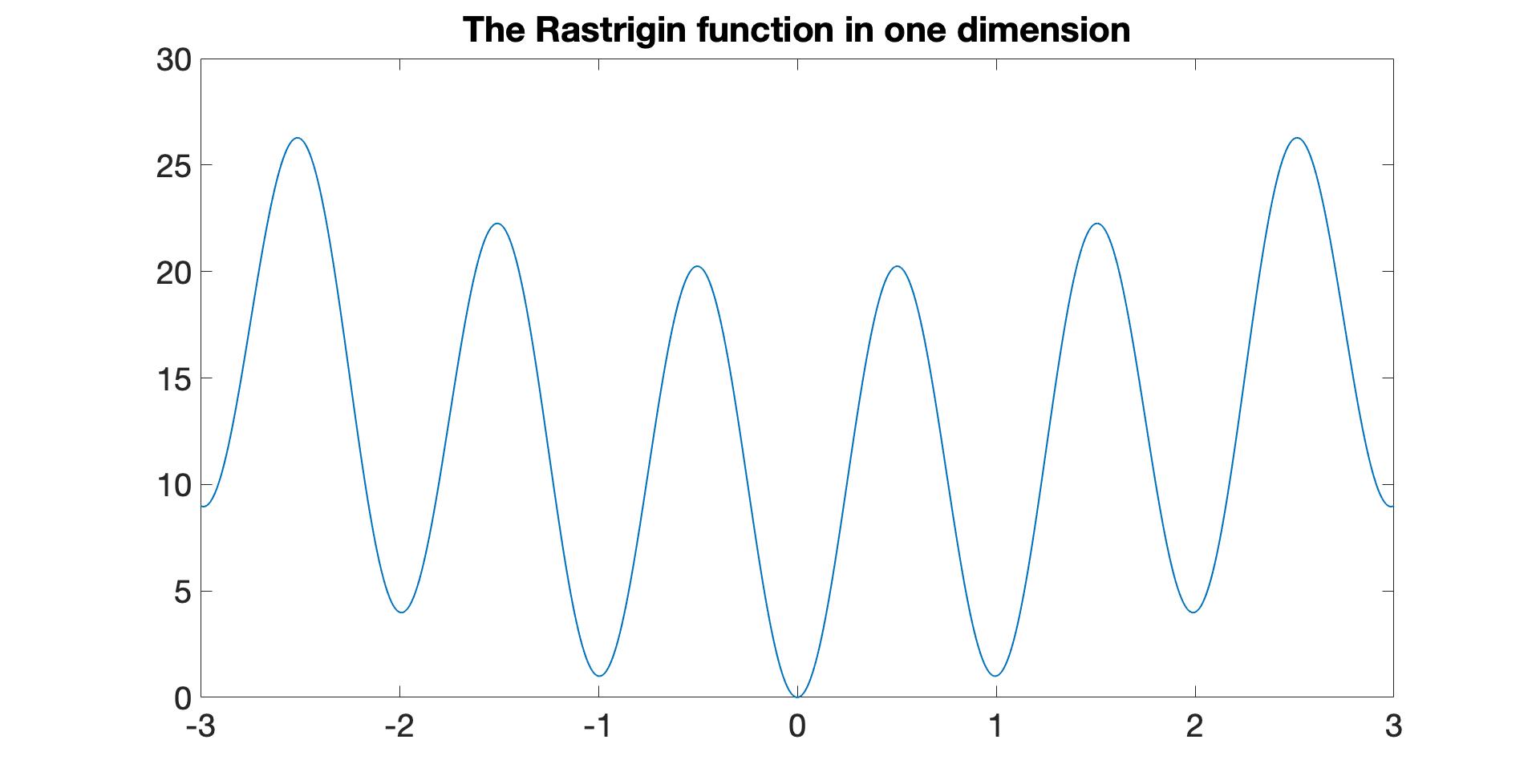}\\
\caption{}
\label{fig: Rast fcn}
\end{figure}

Figure \ref{fig: Rast fcn} shows the shape of the Rastrigin function $L(x)$ when $x\in \R^1, B = C = 0$ in \eqref{Rast fcn}. This illustration reveals that the local minima of this function are very close to the global minimum, so it is not easy for optimization algorithms to discern the location of the global minimum. The numerical experiments of \cite{pinnau2017consensus} indicate that their method performs well in finding the global minimum of the $one$-dimensional  and $twenty$-dimensional Ackley functions. However, compared to the Ackley function, the local minima of the Rastrigin function are much closer to the global minimum, so it is harder to find its global minimum. The performance of their method for $d=20$ is not good enough for the same set of parameters, as shown in \cite[Table 2]{pinnau2017consensus}. The success rate in their numerical experiments for $N=50, 100, 200$ is from $34\%$ to $64\%$.

In our numerical experiments, we set $C = 0$ and $B = 0, 1, 2$. We initialize all the particles uniformly on the interval $[-3,3]$. For the cases $B = 1,2$, the minimizer is not at the center of the initialization, which increases the difficulty of converging towards it. Besides we use partial updates in Step 3 and set
$\g = 0.01$ and $\beta = 30$.
For each simulation, we run $10^4$ steps. Our results are shown in Figure \ref{fig: R_fcn_20}. We display the success rate and averaged distance to the global minimum for 100 simulations. Notice that in  \cite[Table 2]{pinnau2017consensus}, $v_f(T)$ corresponds to $\bws_k$ in our paper if readers are interested in comparing performances. Here we consider one simulation is successful if the final $\bws_k$ is close to the global minimum $\ws$ in the sense that,
\begin{equation*}
	\lv (\bws_k)_i - (\ws)_i \rv < 0.25, \qd \text{for all } i,
\end{equation*}
which means our result is in one-half width of the global minimizer and $0.25$ is in order to keep consistency with \cite{pinnau2017consensus}. 
Figure \ref{fig: R_fcn_20} shows that our method is not only more efficient, but also performs better in terms of finding the global minimizer. Notice that although condition (\ref{eq: ass on lam}) is violated, as we mentioned in Remark \ref{rmk:violation}, the algorithm approaches the global minimum.  Also we notice that the success rate becomes worse when $N = 200$. This is possibly due to random fluctuation, but in general, larger $N$ gives better results but computationally more expensive. 
\begin{figure}[htbp]
\begin{center}
\includegraphics[width=0.9\textwidth]{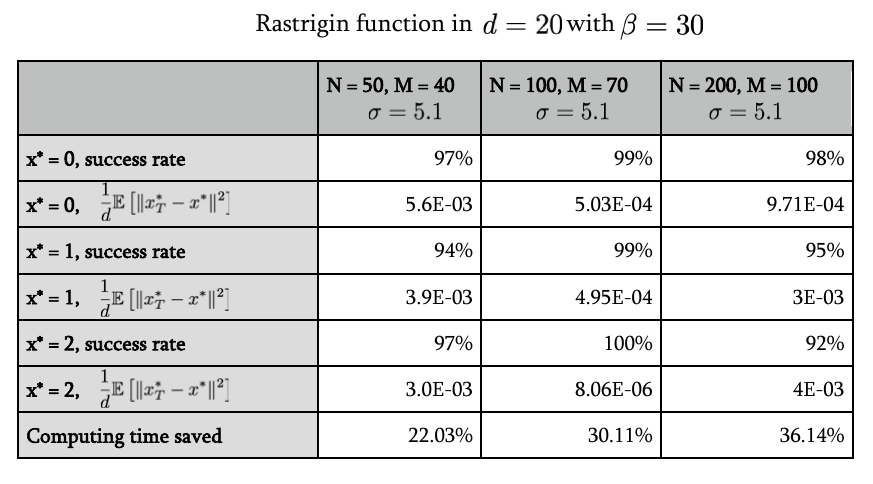}
\end{center}
\caption{This table shows the success rate and the error of our algorithm towards the global minimum for different Rastrigin functions with parameters leading to the global minimum being given by the constant vectors specified in each row. We also show the computational savings.}
\label{fig: R_fcn_20}
\end{figure}

\subsection{Experiments on MNIST data sets}\label{sec: MNIST}
In this section, we will run an optimization problem from machine learning, in order to show that our method also works for high dimensionality.
The MNIST data is a set of pictures with grayscale numbers from $0$ to $9$. The input data $\h{x}$ is a vector of dimension 728, which records the Grayscale of each pixel.  The output data $\h{y}\in \{\vec{e}_k\}_{k=1}^{10}$, where $\vec{e}_k$ is a vector of dimension $10$ with only the $k$-th element $1$ and $\vec{e}_k$ represents that it is a picture of number $k-1$. We use the Neural Network without hidden layers to model this classification problem, the function defining the neural network is given by
\begin{equation*}
    f(x, \h{x}) = a( ReLu(\theta \h{x} + B)), \qd x = (\theta, B),
\end{equation*}
is a function depending on the parameter $x$ and mapping $\h{x}\in \R^{728}$ to $\R^{10}$. Here $\theta \in \R^{10\times 728}, B\in \R^{10}$. $ReLu(x) = x\mathbb{1}_{x\geq0}=((x_i)_+)_{i\in\{1,\dots,728\}}$ is an activation function with $(r)_+$ being the positive part of the number $r$, while $a(x)$ is another activation function called {\it softmax}, which reads
\begin{equation*}
    a({\bf x}) = \frac{e^{x_j}}{\sum_j e^{x_j}}.
\end{equation*}
So that the $j-$th component of $f$ represents the probability of $\h{x}$ being the image associated to number $j-1$. The objective function to be minimized is the cross entropy loss, 
\begin{equation}
\label{eq: training error}
    L(x) = \frac{1}{n}\sum_{i=1}^n\ell(f(x,\h{x}_i),\h{y}_i), \qd \ell(f,y) =-\sum_{k=1}^{10} \h{y}_k \log(f_k),
\end{equation}
where the observations belong to the subset $\h{y}\in \{\vec{e}_k\}_{k=1}^{10}$. 

In the setup of deep learning, one uses deep neural network to construct the model. As the neural network gets deeper or wider, the dimension $d$ of parameter $\w$ becomes very large, $d \gg n \gg 1$, which results in potentially many local minima.  So the goal here is not only finding the global minimum, but also the good global minimum. It is common practice to quantify the quality of the minimum by the test accuracy. We take the largest component as the prediction of our model at $\h{x}$, that is,
\begin{equation*}
\label{eq: prediction}
\begin{aligned}
	g(\ws,\h{x}) = \vec{e}_j, \qd \text{where }j = \argmax_i{f(\ws,\h{x})_i }
\end{aligned}
\end{equation*}
and $f(\ws,\h{x})_i$ means the $i$-th component. We further define the test accuracy by 
\begin{equation}
\label{eq: test error}
\begin{aligned}
       \text{accuracy}_{test}(\ws) = \frac{1}{p}\sum_{i= 1}^{p} \mathbb{1}_{\{g(\ws,\h{x}^\tt_i)=\h{y}^\tt_i\}},
\end{aligned}
\end{equation}
where $p$ is the size of the test set,  number of data in the test set.


Let us first discuss how different elements in the algorithm influence the convergence rate. For all experiments, we use exactly the same initialization, which is drawn from standard normal distribution. Besides, we use the full updates in Step 3 and set  the following values as reference case for our simulations,
\begin{equation}\label{reference}
\begin{aligned}
    &N = 100, \qd M = 10, \qd n = 10^4, \qd m = 50, \qd\g = 0.1, \qd \s = \sqrt{0.1}, \qd \lam = 1.
\end{aligned}
\end{equation} 
Here $N$ is the number of total particles, $M$ is the batch size used to update $\bws$; $n$ is the number of total training data, $m$ is the batch size used to calculate the estimated objective function $\hL_j$;  $\g, \s,\lam$ are the learning rate, the noise rate and drift rate respectively. As mentioned in Remark \ref{rmk: step3}, we allow all the particles to do an independent Brownian motion with variance $\s$ when $\bws$ stops updating and then the algorithm repeats until stabilization. 

In both Figures \ref{fig: summarize_para1} - \ref{fig: summarize_para3}, the x-axis represents for the number of epoch. Here one epoch is equal to $n/m$ steps, which means we go through the whole training data set: $200$ steps for $m = 50$ and $1$ step for $m = n$.  The y-axis represents the test accuracy as defined in (\ref{eq: test error}) over $p=10^4$ data sets. The test data set elements $(x_i^{test}, y_i^{test})_{i=1}^{10000}$ are all different from the training data set elements we used in the objective function \eqref{eq: training error}. 

\begin{figure}[htbp]
\includegraphics[width=1\textwidth]{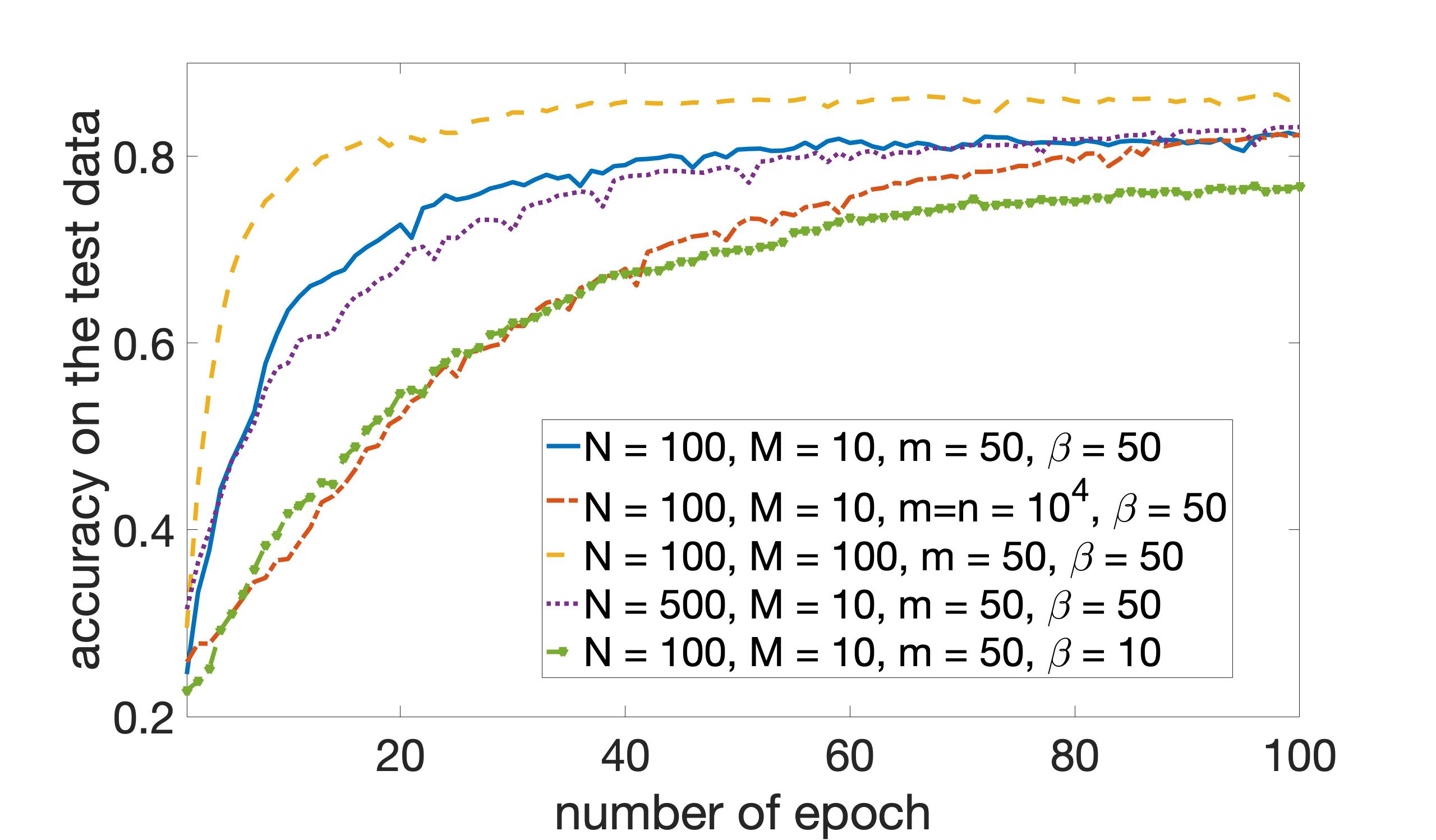}\\
\caption{Comparison of the performance between the reference solution with parameter in \eqref{reference} and first set of data with different parameters as explained in the inlet.}
\label{fig: summarize_para1}
\end{figure}

In Figure \ref{fig: summarize_para1} we compare the performance over the neural network of the reference solution with the parameters in \eqref{reference} with respect to other set of parameters. The main general observations inferred are:
\begin{itemize}
\item Using estimated value $\hLj$ is better than using the exact value $L_j$. The small batch $m = 50$ not only saves $99.5\%$ of the computational cost when calculating $L^j$, the increase rate of the accuracy is even better than the full batch. 

\item Using more particles in the interaction, larger $M$, to compute the average $\bws$ increases the overall performance as expected. However, we have to point out that the case where $M = 10$ save $40\%$ of computing time compared to the case $M = 100$. 

\item Our method with $100$ particles and $10$ interacting particles at each step already gives good accuracy. We will show later in Figure \ref{fig: summarize_para3} that as the number of particles goes to $10^3$, the accuracy will be comparable to SGD. However, compared to the performance of $N= 100$ and $N=500$, the accuracy improved slowly as the number of particles becomes larger.

\item Larger $\a$ corresponds to faster convergence rate. As $\a$ becomes larger, it achieves better accuracy faster. However, $\a$ cannot be too large, otherwise $\mu^j$, defined as in (\ref{eq: ws exponential}), is smaller than the minimal threshold positive value for the computer, which makes $\bws$ infinity. 

\end{itemize}

In Figure \ref{fig: summarize_para2} we compare our results with parameters \eqref{reference} with the simulations using the variations of the new CBO model \eqref{eq:optmodel} discussed in Section 2. We use the variant of the CBO model without the noise term and the variant of the CBO model using the minimal value of the cost \eqref{eq: ws argmin} over the agents rather than the average $\bws$. We deduce the following general observations:
\begin{itemize}

\item A similar behavior in the performance with or without the last term in (\ref{discrete swarming}) is obtained. Since there is already stochasticity involved when calculating $\hLj$ and selecting random subsets of particles, it is usually not necessary to add extra noise when updating the positions of particles for these machine learning problems. However, for other optimization problems like the numerical experiment in Section \ref{sec: Rast}, the geometric Brownian noise seems necessary to avoid clogging in local minima.

\item  Using $\argmin \hL(\Wj)$ to update $\ws_k$ has also a similar performance.
\end{itemize}

In Figure \ref{fig: summarize_para3}, we compare our results with stochastic gradient descent method. We use the same data set and neural network structure. We set the learning rate equal to $0.1$, which is the same as $\g$ in our new method. To make a fair comparison, we run $1000$ simulations of SGD with different initializations follow from standard normal distributions, which is also the same initialization as the proposed method. We plot the best one among all SGD simulations. We can see that our method with $1000$ particles, which have the same computational cost, is slightly better than the best SGD over $1000$ simulations. Besides, if we use $N=10^4$, the test accuracy could be improved to around 90\%. Therefore, our method can potentially get comparable accuracy with SGD in some settings.

As a concluding remark, we have shown in our numerical examples that, two alternative numerical methods, one where only random batch is involved without the diffusion term, one where $\bar{x}^*_k$ is directly equal to the argmin of the particles' value, performs as well as our method. However, the theoretical proof will be left for future study.

\begin{figure}[htbp]
\includegraphics[width=1\textwidth]{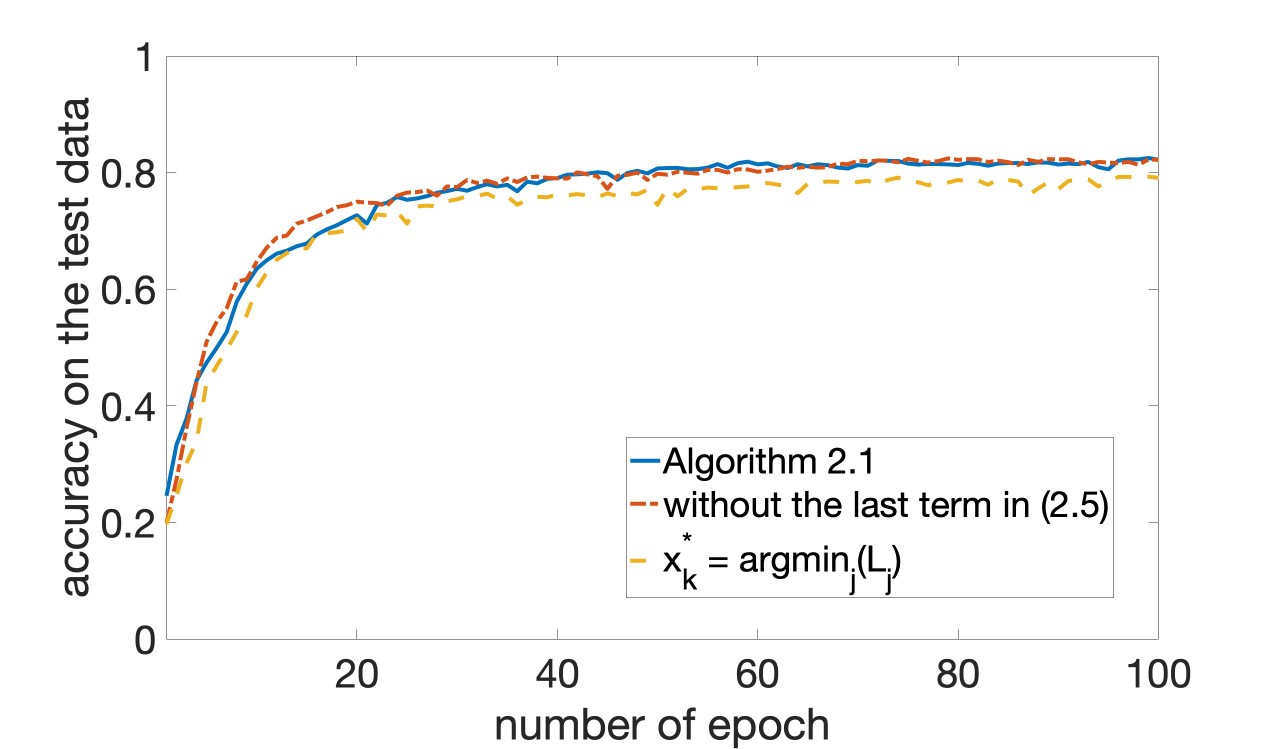}\\
\caption{Comparison of the performance between our new CBO algorithm and its variants.}
\label{fig: summarize_para2}
\end{figure}

\begin{figure}[htbp]
\includegraphics[width=1\textwidth]{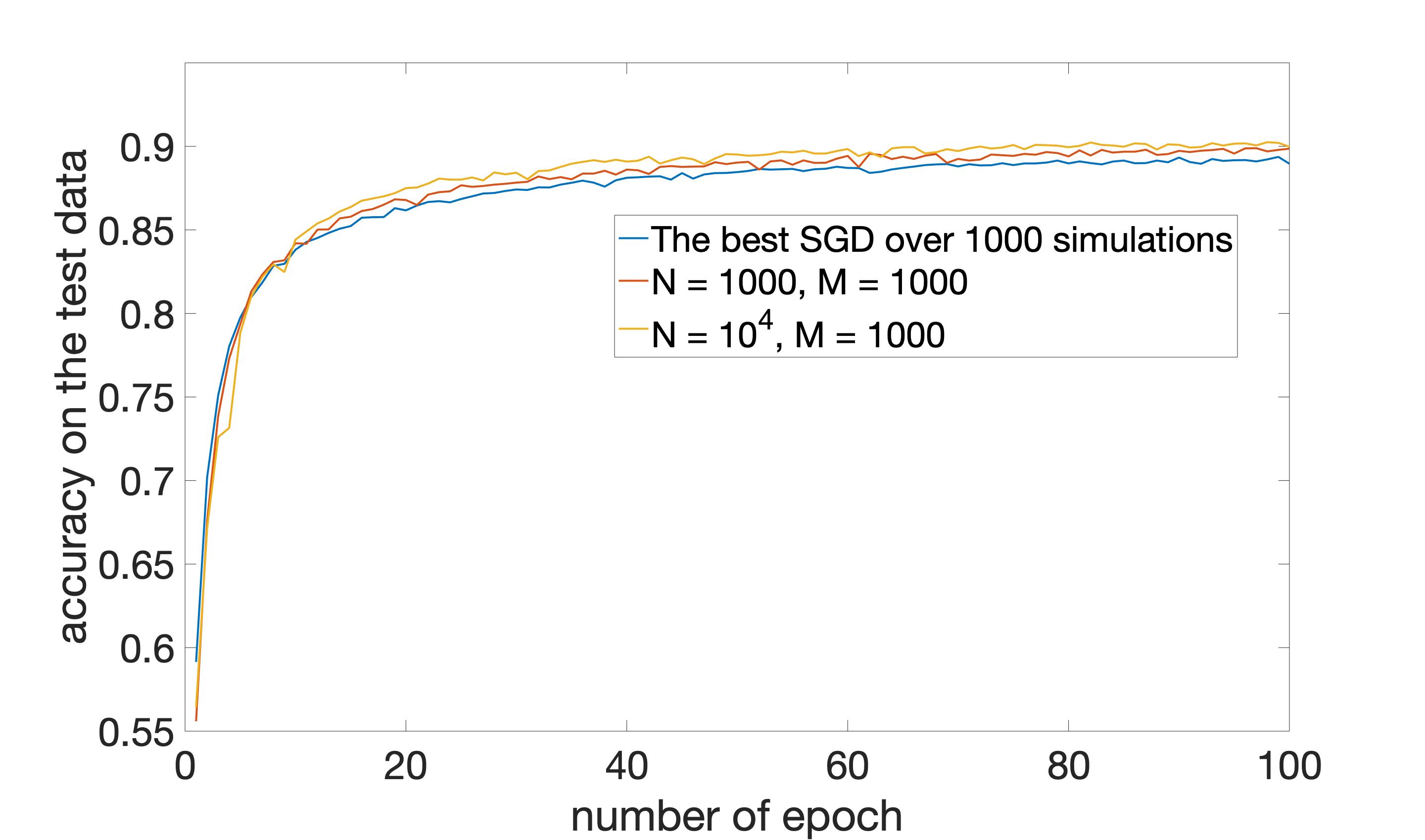}\\
\caption{Comparison of our new CBO algorithm and SGD.}
\label{fig: summarize_para3}
\end{figure}


\section{Conclusion}
We improve the gradient-free optimization method upon \cite{pinnau2017consensus}, to make it effective for high dimensional optimization problems. We show in Theorem \ref{thm:timecontinuous} and Proposition \ref{prop: semi discrete} that because of the component-wise geometric Brownian motion, the mean field limit of the method always converges to its good approximation of the global minimum with all the parameters independent of dimensionality. We show in Section \ref{sec: MNIST} that for the MNIST data with two layers Neural Network, which is a $7290$-dimensional optimization problem, only with $100$ particles, it can already achieve 82\% accuracy. In another example \ref{sec: Rast}, our method has a significant higher success rate in finding the global minimum of the Rastrigin function compared to the practical implementation of the original method introduced in \cite{pinnau2017consensus}. 

There are still lots of open problems in this direction, among them:
\begin{itemize}
\item Theoretical study for the method. Our theorems have not involved the random batch on particles and random batch on the data set. How the random batch method will affect the process of finding the global minimum will be the object of future studies.
\item Stability condition for the method and criteria of choosing the optimal variables, such as, $M,N,\a, \sigma, \gamma$ remain to be understood. 
\end{itemize}

\section*{Acknowledgement}
JAC was partially supported by the EPSRC grant number EP/P031587/1.
SJ was partially supported by NSFC grants No. 11871297 and No. 31571071. LL was partially supported by NSFC grants No. 11901389 and No. 11971314, and the Shanghai Sailing Program 19YF1421300. 
We thank Dr. Doheon Kim for discussion on the convergence rate of the Laplace principle.

\appendix


\section{Proof of Theorem \ref{thm:timecontinuous}}\label{app:meanfield1}

To prove this theorem, we need some preparation. 
\begin{lemma}
$V(t)$ and $M(t)$ satisfy the following:
\begin{subequations}
\begin{align}
& \frac{d}{dt}V(t) \le -\left(2\lambda-\sigma^2-\sigma^2\frac{e^{-\beta L_m}}{M_L(t)}\right)V(t), \label{eq:Vineq}\\
& \frac{d}{dt}M_L^2(t)\ge -2\beta c_L(2\lambda+\sigma^2)e^{-2\beta L_m} V(t).
\end{align}
\end{subequations}
\end{lemma}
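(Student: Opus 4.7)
The plan is to derive both inequalities directly from Itô calculus applied to the SDE \eqref{eq: cts swarming} together with a single key bias estimate on $|m-\bar{x}^*|^2$. First, for $V(t)$: I apply Itô componentwise to obtain $\frac{d}{dt}\E X_j^2 = -2\lambda\E[X_j(X_j-\bar{x}^*_j)] + \sigma^2\E(X_j-\bar{x}^*_j)^2$ and $\frac{d}{dt}m_j = -\lambda(m_j-\bar{x}^*_j)$ from taking expectation in the SDE. Subtracting $\frac{d}{dt}m_j^2$ from $\frac{d}{dt}\E X_j^2$ and summing over $j$ yields the exact identity
\[
\frac{d}{dt}V(t) = (-2\lambda+\sigma^2)V(t) + \sigma^2|m-\bar{x}^*|^2,
\]
where I have used $\E|X-\bar{x}^*|^2 = V + |m-\bar{x}^*|^2$.

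Next I bound the bias $|m-\bar{x}^*|^2$. Setting $w(X):=e^{-\beta L(X)}/M_L$ so that $\E w = 1$, one has $\bar{x}^* - m = \E[(X-m)(w-1)]$, and componentwise Cauchy--Schwarz gives $|\bar{x}^*-m|^2 \le V\cdot\E(w-1)^2 = V(\E w^2 - 1) \le V(e^{-\beta L_m}/M_L - 1)$, where the last step uses $\E e^{-2\beta L(X)} \le e^{-\beta L_m} M_L$. Plugging this back produces the first claimed decay bound on $V(t)$ (up to the precise numerical constants and the replacement of $M_L(t)$ by $M_L(0)$, which in the proof of Theorem~\ref{thm:timecontinuous} will be justified via the bootstrap on $M_L$).

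For $\frac{d}{dt}M_L^2 = 2M_L\frac{d}{dt}M_L$ I apply Itô to $f(x)=e^{-\beta L(x)}$, using $\nabla f = -\beta e^{-\beta L}\nabla L$ and $\nabla^2 f = e^{-\beta L}(\beta^2 \nabla L(\nabla L)^T - \beta\nabla^2 L)$; taking expectation and using the diagonal quadratic variation $d[X_i,X_j]=\sigma^2(X_i-\bar{x}^*_i)^2\delta_{ij}\,dt$ gives three terms. The $(\partial_iL)^2$ term is non-negative and is simply discarded. The Hessian-diagonal term is controlled by $|\sum_i \partial_{ii}L(X-\bar{x}^*)_i^2|\le c_L|X-\bar{x}^*|^2$ via Assumption \ref{ass:modelcond}. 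For the remaining drift term $\beta\lambda\E[e^{-\beta L}\nabla L\cdot(X-\bar{x}^*)]$, the key trick is to Taylor-expand $\nabla L(X) = \nabla L(\bar{x}^*) + \nabla^2 L(\xi_X)(X-\bar{x}^*)$; the linear piece integrates to zero because the defining identity for the weighted mean is exactly $\E[(X-\bar{x}^*)e^{-\beta L(X)}] = 0$, leaving only a Hessian quadratic form bounded by $r(\nabla^2 L)|X-\bar{x}^*|^2 \le c_L |X-\bar{x}^*|^2$.

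Collecting these estimates, $\frac{d}{dt}M_L \ge -\beta c_L e^{-\beta L_m}(\lambda+\sigma^2/2)\E|X-\bar{x}^*|^2$. Inserting $\E|X-\bar{x}^*|^2 = V + |m-\bar{x}^*|^2 \le V\cdot e^{-\beta L_m}/M_L$ from the bias estimate and multiplying by $2M_L$ cancels the $M_L$ in the denominator and produces the lower bound of the desired form $-C\,\beta c_L(2\lambda+\sigma^2)e^{-2\beta L_m}V$. The main obstacle is precisely the gradient drift term: a naive bound would bring in $\|\nabla L\|_\infty$, losing the Hessian-only dependence that the lemma advertises. The Taylor expansion around $\bar{x}^*$, together with the zero-mean identity built into the definition of $\bar{x}^*$, is what allows the estimate to close using $c_L$ alone, which is exactly what Assumption \ref{ass:modelcond} provides.
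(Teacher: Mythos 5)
Your proposal is correct and follows essentially the same route as the paper's proof: the Itô identity for $V$, a Jensen/Cauchy--Schwarz bias estimate bounding $|\E X-\bar{x}^*|^2$ by $V\,e^{-\beta L_m}/M_L$, and Itô applied to $e^{-\beta L}$ with the positive $(\partial_i L)^2$ term discarded, the diagonal Hessian term bounded by $c_L$, and the $\nabla L(\bar{x}^*)$ contribution eliminated through the identity $\E\bigl[e^{-\beta L(X)}(X-\bar{x}^*)\bigr]=0$ followed by a mean-value/Taylor bound via $r(\nabla^2 L)\le c_L$. The only divergence is in the numerical constants: your Itô correction $\sigma^2\,\E|X-\bar{x}^*|^2$ (the paper's computation carries $\tfrac12\sigma^2$ instead) together with your slightly sharper bias bound yields $\frac{d}{dt}V\le -\bigl(2\lambda-\sigma^2\,e^{-\beta L_m}/M_L\bigr)V$ rather than the constants displayed in \eqref{eq:Vineq}, a discrepancy you flag explicitly; it only shifts the precise definition of $\mu$ in Theorem \ref{thm:timecontinuous} and leaves the structure and conclusion of the argument unchanged.
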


\begin{proof}
By It\^o's calculus, it holds that
$$
d|X-\E X|^2=2(X-\E X)\cdot dX-2(X-\E X)\cdot d\E X
+\sigma^2 \sum_{i} (X-\bws)_i^2 dt.
$$
Hence, one can deduce that
\[
\begin{split}
\frac{d}{dt}V(t) &=-2\lambda \E \Big[ (X-\E X)\cdot(X-\bws)\Big]
+\sigma^2\E|X-\bws|^2\\
&=-2\lambda V(t)+\sigma^2\E|X-\bws|^2.
\end{split}
\]
Here, we used the fact $\E(X-\E X)\cdot(\E X-\bws)=0$. Moreover, we get
$\E|X-\bws|^2=V(t)+|\E X-\bws|^2$. By Jensen's inequality, for any $a\in \R^d$,
\begin{gather}\label{eq:jensen}
|a-\bws|^2 \le \frac{\E_{X_1\sim X} |a-X_1|^2 e^{-\beta L(X_1)}}{M_L},
\end{gather}
where $X_1\sim X$ means $X_1$ has the same distribution as $X$. Therefore,
\begin{gather}\label{eq:apppfaux2}
\E|X-\bws|^2\le  V(t)+e^{-\beta L_m}\frac{V(t)}{M_L(t)},
\end{gather}
and \eqref{eq:Vineq} follows.  {We remark that if $L$ a constant, the right hand side of \eqref{eq:apppfaux2} is $2V(t)$. However, in this case,
$\bws=\E X$ and the upper bound should be $V(t)$ instead of $2V(t)$. The reason is that Jensen's inequality \eqref{eq:jensen} is lost for this case.}

Analogously, by It\^o's calculus, one can infer that
\[
\begin{split}
&d\E e^{-\beta L(X)}=-\beta \E e^{-\beta L(X)} \nabla L(X)\cdot dX \\
& +\frac{1}{2}\sigma^2 \sum_i \E \Big[ e^{-\beta L(X)}(X-\bws)_i^2 (\beta^2 (\partial_i L)^2
-\beta \partial_{ii}L(X)) \Big]\,dt =: (I_1+I_2)dt.
\end{split}
\]
By definition and Assumption \ref{ass:modelcond}, one obtains
\[
I_1=\beta\lambda \E e^{-\beta L(X)}(\nabla L(X)-\nabla L(\bws))\cdot (X-\bws)
\ge -\beta \lambda e^{-\beta L_m} c_L \E |X-\bws|^2
\]
where $\E e^{-\beta L(X)}\nabla L(\bws)\cdot (X-\bws)=0$ is used.
For $I_2$, one has
\[
I_2\ge \frac{1}{2}\sigma^2(-\beta c_L)e^{-\beta L_m} \E|X-\bws|^2.
\]
Hence, we conclude that
\begin{equation}\label{xxx}
\frac{dM_L}{dt}\ge -\beta c_L e^{-\beta L_m}(\lambda+\frac{1}{2}\sigma^2)
\E |X-\bws|^2.
\end{equation}
Using the same estimate as in \eqref{eq:jensen}, one finds
\begin{gather}\label{eq:aux1}
\E |X-\bws|^2 \le V(t)+\frac{e^{-\beta L_m}}{M_L(t)}V(t)\le 2\frac{e^{-\beta L_m}}{M_L(t)}V(t).
\end{gather}
Inserting \eqref{eq:aux1} into the differential inequality \eqref{xxx}, the desired estimate follows.
\end{proof}

\begin{proof}[Proof of Theorem \ref{thm:timecontinuous}]
Define
\[
T:=\sup\left\{t: M_L(s)\ge \frac{1}{2}M_L(0),~\text{for all } s\in [0, t] \right\}.
\]
Clearly, $T>0$. Assume that $T<\infty$. Then, for $t\in [0, T]$, by the assumption on $\mu$ in \eqref{eq:ass}, one can deduce that
\[
2\lambda-\sigma^2-\sigma^2\frac{e^{-\beta L_m}}{M_L(t)}
\ge 2\lambda-\sigma^2-2\sigma^2\frac{e^{-\beta L_m}}{M_L(0)}=\mu>0.
\]
Consequently, one has
\[
\frac{dV}{dt}\le -\mu V(t).
\]
and thus
\[
V(t)\le V(0)\exp(-\mu t).
\]
Hence, by the assumption on $\nu$ in \eqref{eq:ass},
\begin{gather*}
\begin{split}
M_L^2(t) &\ge M_L^2(0)-2\beta c_L(2\lambda+\sigma^2)e^{-2\beta L_m}
V(0)\int_0^t e^{-\mu s}\,ds\\
&> M_L^2(0)-\frac{2V(0)\beta c_L(2\lambda+\sigma^2)e^{-2\beta L_m}}{\mu}\ge \frac{1}{4}M_L^2(0).
\end{split}
\end{gather*}
This means that there exists $\delta>0$ such that
$M_L^2(t)\ge \frac{1}{4}M_L^2(0)$ in $[T, T+\delta)$ as well.
This then contradicts with the definition of $T$. Hence,  $T=\infty$.
Consequently, 
\begin{gather}\label{eq:Vt}
V(t)\le V(0)\exp(-\mu t)
\end{gather}
holds and 
\begin{gather}\label{eq:Mt}
M_L(t)>\frac{1}{2}M_L(0)
\end{gather}
for all $t>0$.
Using again Jensen's inequality \eqref{eq:jensen} and \eqref{eq:Vt}-\eqref{eq:Mt}, we infer that
\[
\left|\E X-\bws\right|^2\le \frac{\E_{X_1\sim X}|X_1-\bws|^2 e^{-\beta L(X_1)}}{M_L(t)}
\le e^{-\beta L_m}\frac{V(t)}{M_L(t)}\le C\exp(-\mu t).
\]
Moreover, one has
\[
|\frac{d}{dt}\E X|\le \lambda \E |X-\bws|
\le \lambda \sqrt{\E |X-\bws|^2}\le \lambda \sqrt{V(t)+|\E X-\bws|^2}
\le C\exp(-\mu t/2).
\]
Since the right-hand side is integrable on time, it follows that $\E X\to \tilde{x}$ for  some $\tilde{x}$
and $\bws\to \tilde{x}$, with exponential rate.
Since $\E X\to \tilde{x}$ and $V(t)\to 0$, $M_L(t)\to e^{-\beta L(\tilde{x})}$. Hence, we deduce that
\[
e^{-2\beta L(\tilde{x})} > M_L^2(0)(1-\nu).
\]
Therefore, we conclude that
\[
L(\tilde{x})<-\frac{1}{\beta}\log M_L(0)-\frac{1}{2\beta}\log(1-\nu).
\]
By the assumption on $\nu$ in \eqref{eq:ass}, one thus has
\[
L(\tilde{x})<-\frac{1}{\beta}\log M_L(0)+\frac{\log 2}{\beta}.
\]
By the Laplace principle \eqref{eq:laplaceprinciple}, $r(\beta)=-\frac{1}{\beta}\log M_L(0)-L_m\to 0$. See more details in \cite{carrillo2018analytical}.
\end{proof}

\section{Proof of Proposition \ref{prop: semi discrete}}\label{app:semicont}
\begin{proof}
Since $\bar{x}_k^*$ is constant during the time interval, we find
that the mean value $m(t)$ satisfies
\[
\frac{d}{dt}(m(t)-\bar{x}_k^*)=-\lambda(m(t)-\bar{x}_k^*)
\]
Therefore, we get
$
m(t_k)-\bar{x}_k^*= (m(t_{k-1})-\bar{x}_k^*)e^{-\lambda \gamma}.
$
Hence, one obtains
$
m(t_k)=m(t_{k-1})e^{-\lambda \gamma}+\bar{x}_k^*(1-e^{-\lambda\gamma}).
$
Consequently, it holds that
\[
m(t_k)=m_0e^{-\lambda k\gamma}+\sum_{\ell=0}^{k-1} (1-e^{-\lambda\gamma})\bar{x}_\ell^*e^{-(k-\ell)\lambda\gamma} .
\]
If $\bar{x}_{\ell}^*$ is bounded, this summation converges, and the sum is controlled by
$C\sup_k\|\bar{x}_k^*\|$ with $C$ independent of $\gamma$.
By It\^o's calculus, the second moment satisfies
\[
\frac{d}{dt}\mathbb{E}|X-\bar{x}_k^*|^2=(-2\lambda+\sigma^2)\mathbb{E}|X-\bar{x}_k^*|^2.
\]
Since the variance is given by
\[
V(t_k)=\mathbb{E}|X-\bar{x}_k^*|^2-|m(t_k)-\bar{x}_k^*|^2, 
\]
we have
\[
V(t_k)=V(t_{k-1})e^{(-2\lambda+\sigma^2)\gamma}+(e^{(-2\lambda+\sigma^2)\gamma}
-e^{-2\lambda\gamma})|m(t_{k-1})-\bar{x}_k^*|^2.
\]
If  $\{\bar{x}_k^*\}$ is bounded, then $m(t_k)$ is bounded as has been shown.  {Consequently, $V(t_k)$ is bounded uniformly. It then follows that there exists a compact set $K$ such that
\[
\sup_k \int_{\R^d\setminus K} d\rho(x, t_k)<1/2.
\]
Hence, we obtain that
\[
M_L^*\ge \frac{1}{2}\inf_{x\in K}e^{-\beta L(x)}>0.
\]}
 {Using \eqref{eq:jensen}, we have
\[
|m(t_{k-1})-\bar{x}_k^*|^2 \le \frac{e^{-\beta L_m}}{M_L}V(t_{k-1})
\le \frac{e^{-\beta L_m}}{M_L^*}V(t_{k-1}).
\]}
If $\gamma$ satisfies condition \eqref{eq:discretecondition}, then one sees easily that $V(t_k)\to 0$
and $|m(t_{k-1})-\ws_k|\to 0$. Hence, it is clear that
\[
\bar{m}:=\lim_{k\to\infty}m(t_k)
\]
exists.

Using Chebyshev's inequality, it is easy to see that for any $\epsilon>0$, there exists $R>0$ such that
\[
\sup_{t\ge 0}\E 1_{X\in \R^d\setminus B(0,R)}\le \epsilon.
\]

For any test function $\varphi\in C_b$, we find $R>|\bar{m}|$, $\phi\in C_b^2(\R^d)$ such that $\|\phi\|_{C_b}\le 2\|\varphi\|_{C_b}$, and that
\[
\sup_{x\in B(0, R)} |\phi-\varphi |\le \epsilon,
\]
and
\[
\sup_{t\ge 0}\E 1_{X\in \R^d\setminus B(0,R)}\le \frac{\epsilon}{\|\varphi\|_{C_b}}.
\]
Then, we deduce that
\[
|\E \varphi(X)-\varphi(\bar{m})|\le |\E \phi(X)-\varphi(\bar{m})|+\E |\phi(X)-\varphi(X)|\to 
|\phi(\bar{m})-\varphi(\bar{m})|+\E |\phi(X)-\varphi(X)|\le 2\epsilon.
\]
Consequently, we have
\[
\E e^{-\beta L(X)}\to \E e^{-\beta L(\bar{m})}>0.
\]
Hence, we conclude that
\[
\inf_{t>0}\E e^{-\beta L(X)}>0.
\]
Finally, using similar estimates as in the time continuous case, we obtain that
\[
|\bar{x}_k^*-m(t_k)|^2\to 0,
\]
consistent with the fact that $\bar{x}_k^*$ is bounded.
This finishes the proof.
\end{proof}

\bibliographystyle{plain}

\end{document}